\title{Intermediate  dimension of images of sequences under fractional Brownian motion}
\author{Kenneth J. Falconer}
\date{}
\newcommand{\R}{\mathbb{R}}
\newcommand{\N}{\mathbb{N}}
\newcommand{\E}{\mathbb{E}}
\newcommand{\hd}{\dim_{\textup{H}}}
\newcommand{\ubd}{\overline{\dim}_{\textup{B}}}
\newcommand{\lbd}{\underline{\dim}_{\textup{B}}}
\newcommand{\uid}{\overline{\dim}_{\,\theta}}
\newcommand{\lid}{\underline{\dim}_{\,\theta}}
\newcommand{\be}{\begin{equation}}
\newcommand{\ee}{\end{equation}}
\newtheorem{theorem}{Theorem}[section]
\newtheorem{lemma}[theorem]{Lemma}
\theoremstyle{definition}
\theoremstyle{remark}
\numberwithin{equation}{section}
\begin{document}
\maketitle
\begin{center}
Mathematical Institute,	
University of St Andrews,\\
St Andrews,
Fife KY16 9SS,
UK
\medskip

E-mail: \texttt{kjf@st-andrews.ac.uk}
		 	
\end{center}

\begin{abstract}
\noindent We show that the almost sure $\theta$-intermediate dimension of the image of the set $F_p =\{0, 1,\frac{1}{2^p},\frac{1}{3^p},\ldots\}$ under index-$h$ fractional Brownian motion is $\frac{\theta}{ph+\theta}$, a value that is smaller than that given by directly applying the H\"{o}lder bound for fractional Brownian motion. In particular this establishes the box-counting dimension of these images.
\end{abstract}

\section{Introduction}\label{setting}
Intermediate dimensions were introduced in \cite{fafrke:2018} to interpolate between the Hausdorff dimension and box-counting dimensions of sets where these differ,  see the recent surveys \cite{FalSur,fra:survey:2019} for  surveys on intermediate dimensions and dimension interpolation. 
The lower and upper intermediate dimensions, $\lid E$ and  $\uid E$ of a set $E\subseteq\R^n$ depend on a parameter $\theta\in [0,1]$, with $\underline{\dim}_0 E = \overline{\dim}_0 E = \hd E$ and $\underline{\dim}_1 E = \lbd E$ and $\overline{\dim}_1 E = \ubd E$, where $\hd, \lbd$ and $\ubd$ denote Hausdorff, and lower and upper box-counting dimensions, respectively. Various properties of intermediate dimensions are established in \cite{Ban,fafrke:2018} with the intermediate dimensions reflecting the range of diameters of sets needed to get coverings that are efficient for estimating dimensions. In particular $\lid E$ and $\uid E$ are monotonically increasing in $\theta\in [0,1]$, are continuous except perhaps at $\theta = 0$, and are invariant under bi-Lipschitz mappings. Intermediate dimensions have been calculated for manysets which have differing Hausdorff and box-counting dimensions, including for sets of the form $F_p$ given in \eqref{fp} below in \cite{fafrke:2018}, as well as for  attractors of infinitely generated conformal iterated function systems \cite{BanFr}, spirals \cite{BFF2}, countable families of concentric spheres \cite{Tan}  and topologists' sine curves \cite{Tan}, with non-trivial bounds obtained for self-affine carpets \cite{fafrke:2018, Kol}. 

Specifically, for $E \subseteq \R^n$ and $0 \leq \theta \leq 1$, the  {\em lower intermediate dimension} of $E$ may be defined as
\begin{align}\label{lid}
\lid E =  \inf \big\{& s\geq 0  :  \mbox{ \rm for all $\epsilon >0$ and all $0< r_0 <1$,  there exists $0<r\leq r_0$} \\
 & \mbox{ \rm and a cover $ \{U_i\} $ of $E$ such that  $r^{1/\theta} \leq  |U_i| \leq r $ and 
 $\sum |U_i|^s \leq \epsilon$}  \big\}\nonumber
\end{align}
and the corresponding {\em upper intermediate dimension} by
\begin{align}\label{uid}
\uid E =  \inf \big\{& s\geq 0  :  \mbox{ \rm for all $\epsilon >0$ there exists $0< r_0 <1$ such that for all $0<r\leq r_0$,} \\
 & \mbox{ \rm there is a cover $ \{U_i\} $ of $E$ such that  $r^{1/\theta} \leq  |U_i| \leq r$ and 
 $\sum |U_i|^s \leq \epsilon$}  \big\},\nonumber
\end{align}
where $|U|$ denotes the diameter of a set $U \subseteq \R^n$. When $\theta = 0$ \eqref{lid} and \eqref{uid}
reduce to Hausdorff dimension, since there are  no lower bounds  on the diameters of covering sets. When $\theta=1$ all covering sets are forced to have the same diameter and we recover the lower and upper box-counting dimensions.

It is convenient to work with equivalent definitions of these intermediate dimensions in terms of the exponential behaviour of sums over covers.
For $E \subseteq \R^n$ bounded and non-empty,  $\theta  \in (0, 1]$, $r>0 $ and $s\in [0,n]$, define
\be\label{sums}
S_{r, \theta}^s(E) := \inf \Big\{\sum_i |U_i|^s : \{U_i\}_i\textnormal{ is a cover of }E \textnormal{ such that }r \leq |U_i| \leq r^\theta\,\,\textnormal{ for all } i\Big\}.
\ee
It is immediate that
\be\label{altlid}
\lid E =  \textnormal{ the unique } s\in [0,n] \textnormal{ such that  } \liminf\limits_{r \rightarrow 0} \frac{\log S_{r, \theta}^s(E)}{-\log r} =0
\ee
and 
$$
\uid E =  \textnormal{ the unique } s\in [0,n] \textnormal{ such that  } \limsup\limits_{r \rightarrow 0} \frac{\log S_{r, \theta}^s(E)}{-\log r} =0.
$$

For $p>0$ let 
\be\label{fp}
F_p \ = \ \bigg\{0,  \frac{1}{1^p}, \frac{1}{2^p},\dots, \frac{1}{3^p}\ldots \bigg\}.
\ee
It is shown in \cite{fafrke:2018}  that, for all $\theta \in [0,1]$,
$$
\dim_{\,\theta} F_p\ = \ \frac{\theta}{p+\theta}.
$$

Index-$h$ fractional Brownian motion $(0<h<1)$ is the stochastic process $B_h: \R^{\geq0} \to\R$ such that, almost surely, $B_h$ is continuous with $B_h(0) =0$, and the increments $B_h(x) - B_h(y)$  are stationary and Gaussian with mean 0 and variance $|x-y|^{2h}$, see, for example, \cite{EM,Falconer,Kah,MV,She}. There is a considerable literature on the dimensions of images of sets under stochastic processes, see \cite{Kah} for Hausdorff dimensions, and \cite{Xi} where the packing dimensions of images are expressed  in terms of dimension profiles.

Here we investigate the almost sure intermediate dimensions of $B_h(F_p)$, the image of $F_p$ under index-$h$ fractional Brownian motion. By a simple estimate on the intermediate dimensions of H\"{o}lder images of sets, or see \cite[Section 4]{Ban}, since  $B_h$ has almost sure H\"{o}lder exponent $h-\epsilon$ for all $\epsilon>0$, 
$$\dim_\theta B_h(F_p) \ \leq\  \frac{1}{h} \dim_\theta F_p \  =\ \frac{\theta}{h(p+\theta)};$$
however the actual value is smaller than this.

\begin{theorem}\label{main}
Let $B_h: \R\to \R  $ be index-$h$ fractional Brownian motion. Then almost surely, for all $\theta \in [0,1]$,
\be\label{ans}
\dim_\theta B_h(F_p)\ =\ \frac{\theta}{ph+\theta},
\ee
and in particular
$$
\dim_B B_h(F_p)\ =\ \frac{1}{ph+1}.
$$
\end{theorem}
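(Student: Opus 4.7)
I would prove the upper bound $\uid B_h(F_p) \leq \theta/(ph+\theta)$ and the lower bound $\lid B_h(F_p) \geq \theta/(ph+\theta)$ separately and almost surely for each fixed rational $\theta \in (0,1]$, then invoke the continuity of $\dim_\theta$ on $(0,1]$ (stated in the introduction) to extend the equality to every $\theta$ on a single full-measure event; the case $\theta = 0$ is trivial since $B_h(F_p)$ is countable. Throughout I rely on Kolmogorov's continuity theorem for fractional Brownian motion: almost surely, for every $\epsilon > 0$ there is a random $C_\omega$ with $|B_h(x) - B_h(y)| \leq C_\omega|x-y|^{h-\epsilon}$ on $[0,1]$. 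Writing $y_k = B_h(1/k^p)$, this furnishes the envelope $|y_k| \leq C_\omega k^{-p(h-\epsilon)}$.

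\textbf{Upper bound.} The key idea is to cover $B_h(F_p)$ dyadically in the \emph{image} space rather than in the index space: set $R_j = \{y \in B_h(F_p) : 2^{-j-1} \leq |y| \leq 2^{-j}\}$, and observe from the envelope that $R_j$ contains at most $O(2^{j/(p(h-\epsilon))})$ points. Cover by one set of diameter $r^\theta$ absorbing all rings with $2^{-j} \leq r^\theta$, and for each remaining $R_j$ choose whichever is cheaper between one ball of diameter $r$ per point of $R_j$ (cost $\lesssim 2^{j/(p(h-\epsilon))}\,r^s$) and paving the ring with balls of diameter $r^\theta$ (cost $\lesssim 2^{-j}\,r^{\theta(s-1)}$). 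Writing $q = p(h-\epsilon)$ and $s = \theta/(q+\theta)$, a computation that mirrors the proof of $\dim_\theta F_q = \theta/(q+\theta)$ shows both cost sums are $O(1)$, so $S^s_{r,\theta}(B_h(F_p)) = O(1)$ as $r \to 0$, giving $\uid B_h(F_p) \leq \theta/(q+\theta)$; letting $\epsilon \to 0$ yields the upper bound. Note that this is strictly sharper than what the H\"older pushforward allows, precisely because the image-side dyadic scale is $2^{-jph}$ rather than $2^{-jh}$.

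\textbf{Lower bound and main obstacle.} Apply the mass distribution principle with the random measure
\[ \mu = \sum_{k\geq 1} k^{-\beta}\,\delta_{y_k}, \qquad \beta = 1 + \frac{\theta ph}{ph+\theta}, \]
chosen so that $\beta > 1$ (giving finite total mass) and $\beta - 1 = s\cdot ph$ for $s = \theta/(ph+\theta)$. Using $y_k \sim N(0, k^{-2ph})$, the Gaussian density estimate $\mathbb{P}(y_k \in B(x, \rho)) \lesssim \min(1, \rho k^{ph})$ for $|x| \lesssim k^{-ph}$ (and exponentially smaller otherwise), together with summation against the weights $k^{-\beta}$, yields $\mathbb{E}[\mu(B(x,\rho))] \leq C\rho^s$ uniformly in $x \in \R$ and $\rho > 0$. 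Once this is upgraded to an almost-sure estimate $\mu(B(x,\rho)) \leq C_\omega \rho^{s-\eta}$ valid for all $x$ and all $\rho \in [r, r^\theta]$, the mass distribution principle gives $\lid B_h(F_p) \geq s - \eta$, and $\eta \to 0$ closes the lower bound. The chief obstacle is precisely this expectation-to-almost-sure upgrade: because the $y_k$ are values of a single fBm path, the indicators $\mathbf{1}\{y_k \in B(x,\rho)\}$ are correlated, so a first-moment Markov bound union'd over a dyadic grid of $(x,\rho)$ does not close. The remedy is to estimate joint probabilities $\mathbb{P}(y_j, y_k \in B(x,\rho))$ via the covariance $\mathbb{E}[B_h(u)B_h(v)] = \tfrac{1}{2}(u^{2h}+v^{2h}-|u-v|^{2h})$, thereby controlling a sufficiently high moment of $\mu(B(x,\rho))$ for Borel--Cantelli to succeed. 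The corollary $\dim_B B_h(F_p) = 1/(ph+1)$ is then obtained by setting $\theta = 1$ in \eqref{ans}.
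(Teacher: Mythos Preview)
Your upper bound is sound and close in spirit to the paper's: the paper uses a single cutoff $M$ (cover $B_h(1/k^p)$ individually by intervals of length $r$ for $k\leq M$, then pave $B_h([0,1/M^p])\subset[-KM^{-p(h-\epsilon)},KM^{-p(h-\epsilon)}]$ by intervals of length $r^\theta$, and optimise over $M$); your dyadic-ring decomposition in the image reaches the same conclusion.

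The lower bound, however, has a structural gap that moment estimates will not close. Your measure $\mu=\sum_k k^{-\beta}\delta_{y_k}$ is atomic with an atom of mass $1$ at $y_1$, so for the random centre $x=y_1$ one has $\mu(B(x,\rho))\geq 1$ for every $\rho>0$; the bound $\mu(B(x,\rho))\leq C_\omega\rho^{s-\eta}$ therefore cannot hold uniformly in $x$ once $\rho$ is small, yet the mass-distribution argument for $\lid$ needs precisely this for all $\rho\in[r,r^\theta]$ and all small $r$. The real obstruction is not the correlation of the $y_k$ but the atomicity of any fixed probability measure on a countable set (this is just $\hd B_h(F_p)=0$ in disguise), and the diagonal terms $j=k$ in your proposed second-moment sum already force the failure. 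The paper resolves both issues at once: it takes a \emph{scale-dependent} measure $\mu_r$ (equal mass $1/M$ on $M\asymp r^{-(\theta(1-s)+s)/(1+ph)}$ consecutive points of $F_p$, so the atoms are of admissible size at scale $r$) and, crucially, replaces the uniform ball-mass bound by the capacity inequality
\[
S^s_{r,\theta}\big(B_h(F_p)\big)\ \geq\ r^s\Big[\iint\widetilde\phi^s_{r,\theta}(u-v)\,d\widetilde\mu_r(u)\,d\widetilde\mu_r(v)\Big]^{-1}
\]
of Lemma~\ref{capacitylb}. The energy on the right is a single random scalar whose expectation is bounded via the Gaussian law of the increments (Lemmas~\ref{expect} and \ref{energy}), so Borel--Cantelli along $r=2^{-k}$ delivers the almost-sure lower bound with no union over centres or radii.
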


We obtain the upper bound for $\dim_\theta B_h(F_p)$ by, for each $r$, covering the part of $B_h(F_p)$ near 0 by abutting intervals of lengths $r^\theta$ and the remaining points individually by intervals of length $r$. The lower bound uses a potential theoretic method, estimating an energy of the image under $B_h$ of the measure given by equal point masses on the points of $F_p$ between $1/(2M-1)^p$ and $1/M^p$.

`Intermediate dimension profiles'
 were introduced in \cite{BFF} to develop a general theory of intermediate dimensions including their behaviour under projections and this was developed for random images in \cite{Bur}. By \cite[Theorem 3.4]{Bur} 
$$\dim_\theta B_h(E)\ =\ \frac{1}{h}\dim_\theta^h E$$
where $\dim_\theta^s E$ is the $\theta$-dimension profile of a general compact $E\subseteq\R$, but this does not give an explicit value of $\dim_\theta B_h(F_p)$. Whilst this might be found by an awkward calculation of dimension profiles, our proof here of \eqref{ans} is self-contained.

\section{Proofs}\label{proofs}
We recall from \cite{BFF}  the energy kernels $\widetilde{\phi}_{r, \theta}^s$ on $\R^m$ defined  for $0<r<1, \theta\in [0,1]$ and $0<s\leq m$ by 
\be\label{kerhat}
\widetilde{\phi}_{r, \theta}^s(x)=
\begin{cases}
1 & |x| < r\\ 
\displaystyle{\bigg(\frac{r}{|x|}\bigg)^s} & r \leq |x| < r^\theta\\
0 & r^\theta \leq |x|
\end{cases} 
\ee
(here we only need the case of $m=1$).

The proof of the lower bound for \eqref{ans} uses the following three lemmas. The first is a slight variant of \cite[Lemma 4.3]{BFF} that relates the covering sums to energies with respect to the kernel 
$\widetilde{\phi}_{r, \theta}^s$. We write $\mathcal{M}(F)$ for the set of Borel probability measures supported by$F$.

\begin{lemma}\label{capacitylb}
Let $F\subseteq \R^m$ be compact, $\theta \in (0, 1]$, $0<r <1 $ and $0 \leq s \leq m$ and let $\mu \in \mathcal{M}(F)$. Then
\be\label{sbound}
S_{r, \theta}^s(F)\ \geq\ r^s\bigg[\int\!\!\int \widetilde{\phi}_{r, \theta}^s\big(x -  y\big)d\mu(x) d\mu(y)\bigg]^{-1}
\ee

\begin{proof}
Since $\widetilde{\phi}_{r, \theta}^s$ is lower semicontinuous, by standard potential theory there is an equilibrium measure $\mu_0$ for which  $\int\!\!\int \widetilde{\phi}_{r, \theta}^s\big(x -  x\big)d\mu(x) d\mu(y)$ attains its minimum, say,   
$$\int\!\!\int \phi_{r, \theta}^{s, n}(x - y) d\mu_0(x)d\mu_0(y) = \gamma.$$
Moreover, 
$$\int \widetilde{\phi}_{r, \theta}^s\big(x -  y\big)d\mu_0(y)\ \geq \ \gamma$$
for all $x\in F$, with equality if $x\in F_0$ for a set $F_0\subseteq F$ with  $\mu_0(F_0)=1$.

 If $r \leq \delta < r^\theta$ and $x \in F_0$ then using \eqref {kerhat},
\begin{equation}\label{zz}
\gamma = \int \widetilde{\phi}_{r, \theta}^s(x - y) d\mu(y) 
\geq \int \left(\frac{r}{\delta}\right)^s1_{B(0, \delta)}(x - y) d\mu(y) 
\geq \left(\frac{r}{\delta}\right)^s \mu(B(x, \delta)).
\end{equation}
Let $\{U_i\}_{i }$ be a finite cover of $F$ by sets of diameters $r \leq |U_i|< r^\theta$ and write $\mathcal{I} = \{i : U_i \cap F_0 \neq \emptyset \}$, so for each $i \in \mathcal{I}$ we may choose $x_i \in U_i \cap F_0$ such that $U_i \subseteq B(x_i, |U_i|)$. Then
\begin{equation*}
1 = \mu(F_0) \leq \sum\limits_{i \in \mathcal{I}} \mu(U_i) 
\leq  \sum\limits_{i \in \mathcal{I}} \mu(B(x_i, |U_i|))\leq r^{-s}\gamma \sum\limits_{i \in \mathcal{I}}|U_i|^s
\end{equation*}
by (\ref{zz}),  so
\begin{equation*}
\sum\limits_{i } |U_i|^s \geq r^s\gamma^{-1};
\end{equation*}
 taking the infimum over all such covers gives \eqref{sbound}. (Note that considering covers with $r \leq |U_i|< r^\theta$ makes no difference in the definition \eqref{sums}.)
\end{proof}
\end{lemma}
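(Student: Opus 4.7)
The approach I would take is a standard potential-theoretic argument using the equilibrium measure for the truncated kernel $\widetilde{\phi}_{r,\theta}^s$ of \eqref{kerhat}. Since the asserted inequality must hold for every $\mu\in\mathcal{M}(F)$ and its right-hand side is non-increasing in the energy of $\mu$, it suffices to establish the sharper bound $S_{r,\theta}^s(F)\geq r^s/\gamma$, where $\gamma$ denotes the minimum of the energy over $\mathcal{M}(F)$; the lemma then follows because $\gamma\leq\int\!\!\int\widetilde{\phi}_{r,\theta}^s(x-y)\,d\mu(x)\,d\mu(y)$ for any such $\mu$.

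First I would establish the existence of an equilibrium measure and its basic variational property. A direct check of \eqref{kerhat} shows that $\widetilde{\phi}_{r,\theta}^s$ is lower semicontinuous (its only downward jump is at $|x|=r^\theta$, where it jumps from a positive value to $0$, which preserves lower semicontinuity), so the energy $I(\mu)=\int\!\!\int\widetilde{\phi}_{r,\theta}^s(x-y)\,d\mu(x)\,d\mu(y)$ is lower semicontinuous on the weak-$*$ compact space $\mathcal{M}(F)$ and attains its minimum $\gamma$ at some $\mu_0\in\mathcal{M}(F)$. The classical variational argument --- shifting mass from regions of above-average potential to regions of below-average potential strictly decreases the energy --- then yields the pointwise bound $\psi(x):=\int\widetilde{\phi}_{r,\theta}^s(x-y)\,d\mu_0(y)\geq\gamma$ for all $x\in F$, with equality on a subset $F_0\subseteq F$ of full $\mu_0$-measure.

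The remainder is a mass-distribution calculation. For $x\in F_0$ and any $\delta$ with $r\leq\delta<r^\theta$, the definition \eqref{kerhat} gives the pointwise bound $\widetilde{\phi}_{r,\theta}^s(x-y)\geq(r/\delta)^s\,1_{B(x,\delta)}(y)$, and integrating against $\mu_0$ and using $\psi(x)=\gamma$ on $F_0$ produces $\mu_0(B(x,\delta))\leq\gamma(\delta/r)^s$. Given a finite cover $\{U_i\}$ of $F$ with $r\leq|U_i|<r^\theta$, I would discard the $U_i$ that miss $F_0$, pick $x_i\in U_i\cap F_0$ from each surviving piece so that $U_i\subseteq B(x_i,|U_i|)$, and sum the resulting ball estimates to obtain $1=\mu_0(F_0)\leq\gamma r^{-s}\sum_i|U_i|^s$. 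Taking the infimum over all such covers yields $S_{r,\theta}^s(F)\geq r^s/\gamma$.

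The one subtlety worth flagging is the endpoint discrepancy between the definition \eqref{sums}, which allows the closed range $r\leq|U_i|\leq r^\theta$, and the half-open range $r\leq|U_i|<r^\theta$ that the mass-distribution argument needs, since the kernel vanishes at $|x|=r^\theta$ and would degenerate the ball bound there. This is precisely the point addressed by the parenthetical remark at the end of the lemma; it is routine to handle by shrinking any covering set of diameter exactly $r^\theta$ by an arbitrarily small amount, which leaves the lower bound $r\leq|U_i|$ intact and does not change the infimum in \eqref{sums}.
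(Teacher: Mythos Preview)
Your proposal is correct and follows essentially the same route as the paper: pass to the equilibrium measure $\mu_0$ for the lower semicontinuous kernel $\widetilde{\phi}_{r,\theta}^s$, use the variational identity $\psi(x)=\gamma$ on a full-measure set $F_0$ to obtain the ball bound $\mu_0(B(x,\delta))\leq\gamma(\delta/r)^s$, and then run the mass-distribution argument over a cover. Your added observation that the stated inequality for an arbitrary $\mu$ follows from the sharper bound $S_{r,\theta}^s(F)\geq r^s/\gamma$ via $\gamma\leq I(\mu)$ is exactly the reduction implicit in the paper's passage to $\mu_0$, and your treatment of the endpoint $|U_i|=r^\theta$ matches the paper's parenthetical remark.
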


We next bound the expectation of  $\widetilde{\phi}_{r, \theta}^s$ evaluated on increments of fractional Brownian motion in terms of another kernel: 
\be\label{bounds}
{\psi}_{r, \theta}^{s}(x)
\  = \ \min\bigg\{1,\frac{r^{\theta(1-s) + s}}{|x|^{h}}\bigg\}.
\ee

\begin{lemma}\label{expect}
Let $B_h: \R\to \R  $ be index-$h$ fractional Brownian motion. Then
there is a constant $c$ depending only on $s$ such that for $0<r<1$, $0<\theta \leq 1$ and $0<s<1$,
$$
\E\big( \widetilde{\phi}_{r, \theta}^s(B_h(x) -  B_h(y))\big) 
\ \leq\ c \ {\psi}_{r, \theta}^{s}(x-y). 
$$

\end{lemma}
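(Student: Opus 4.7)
The plan is to reduce the expectation to an elementary one-dimensional Gaussian integral and then split it according to the three regions defining $\widetilde{\phi}_{r,\theta}^s$. Since $B_h(x)-B_h(y)$ is a centred Gaussian with variance $\sigma^2 := |x-y|^{2h}$, setting $\phi_\sigma(u) = (2\pi)^{-1/2}\sigma^{-1}\exp(-u^2/2\sigma^2)$ we have
\[
\E\bigl[\widetilde{\phi}_{r,\theta}^s(B_h(x)-B_h(y))\bigr]
= \int_{-\infty}^{\infty} \widetilde{\phi}_{r,\theta}^s(u)\,\phi_\sigma(u)\,du.
\]
The outer region $|u|\ge r^\theta$ contributes $0$, so only the two inner regions matter.

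Two bounds are needed. The first is trivial: $\widetilde{\phi}_{r,\theta}^s \leq 1$ gives $\E[\cdots]\le 1$. For the second bound, the key simplification is to drop the Gaussian factor by $\phi_\sigma(u)\le (2\pi)^{-1/2}\sigma^{-1}$, reducing matters to Lebesgue integrals. Then the $|u|<r$ piece contributes at most $(2\pi)^{-1/2}\sigma^{-1}\cdot 2r$, and the annular piece $r\le|u|<r^\theta$ contributes at most
\[
\frac{2r^s}{\sqrt{2\pi}\,\sigma}\int_r^{r^\theta} u^{-s}\,du
\;\le\; \frac{2}{\sqrt{2\pi}\,(1-s)}\cdot\frac{r^{\theta(1-s)+s}}{\sigma},
\]
where the assumption $s<1$ makes $\int u^{-s}\,du$ integrable and the bound $r^{\theta(1-s)}-r^{1-s}\le r^{\theta(1-s)}$ uses $\theta\le 1$. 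Finally I observe that $r\le r^{\theta(1-s)+s}$ (equivalent to $\theta\le 1$), so the $|u|<r$ contribution is also bounded by a constant times $r^{\theta(1-s)+s}/\sigma$. Adding the two pieces and recalling $\sigma=|x-y|^h$ gives $\E[\cdots]\le c\,r^{\theta(1-s)+s}/|x-y|^h$ for a constant $c=c(s)$, and combining with the trivial bound $\le 1$ produces the $\min$ on the right-hand side, which is exactly $\psi_{r,\theta}^s(x-y)$.

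There is no real obstacle here; the argument is a routine splitting followed by two elementary estimates. The only place calling for care is the algebra confirming $r\le r^{\theta(1-s)+s}$ and $r^{\theta(1-s)}\ge r^{1-s}$ so that the annular contribution is the dominant one and absorbs the inner piece cleanly; both follow from $0<\theta\le 1$ and $0<s<1$. The role of the hypothesis $s<1$ is essential — it is what keeps $u^{-s}$ integrable near $0$ — while the bound $s\ge 0$ is implicit in the definition of the kernel.
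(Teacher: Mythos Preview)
Your argument is correct and follows essentially the same route as the paper's proof: write the expectation as a Gaussian integral, observe the trivial bound $1$, then bound the density by its maximum $(2\pi)^{-1/2}\sigma^{-1}$, integrate $\widetilde{\phi}_{r,\theta}^s$ over the two inner regions, and absorb the $r$ term into the dominant $r^{\theta(1-s)+s}$ term using $\theta\le 1$. The only cosmetic difference is that the paper folds the symmetry into a constant $c_1=2/\sqrt{2\pi}$ and integrates over $[0,\infty)$, whereas you keep the factor of $2$ explicit.
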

\begin{proof}
Since $B_h(x) -  B_h(y)$ has Gaussian density with mean $0$ and variance $|x-y|^{2h}$, 
$$\E\big( \widetilde{\phi}_{r, \theta}^s(B_h(x) -  B_h(y))\big) 
=
\frac{1}{\sqrt{2\pi}}\frac{1}{|x-y|^{h}}
\int_{-\infty}^\infty \widetilde{\phi}_{r, \theta}^s(t)  \exp\Big(\frac{-t^2}{2|x-y|^{2h}}\Big) dt.
$$
This is bounded above by 1 since $\widetilde{\phi}_{r, \theta}^s(t)\leq 1$ and the Gaussian has integral 1.
With $c_1=2/\sqrt{2\pi}$,
\begin{eqnarray*}
\E\big( \widetilde{\phi}_{r, \theta}^s(B_h(x) -  B_h(y))\big) 
&=& \frac{c_1}{|x-y|^{h}}
\int_{0}^\infty \widetilde{\phi}_{r, \theta}^s(t)  \exp\Big(\frac{-t^2}{2|x-y|^{2h}}\Big) dt\\
&=&\frac{c_1}{|x-y|^{h}}
\bigg[\int_{0}^r  \exp\Big(\frac{-t^2}{2|x-y|^{2h}}\Big) dt
+
\int_{r}^{r^\theta} \frac{r^s}{t^s}  \exp\Big(\frac{-t^2}{2|x-y|^{2h}}\Big) dt\bigg]\\
&\leq&\frac{c_1}{|x-y|^{h}}
\bigg[\int_{0}^r  dt+
\int_{r}^{r^\theta} \frac{r^s}{t^s} dt\bigg]\\
&\leq&\frac{c_1}{|x-y|^{h}}
\bigg[r+
\frac{1}{1-s}r^{s +\theta(1-s)} dt\bigg]\\
&\leq&c_2\frac{r^{s +\theta(1-s)}}{|x-y|^{h}}
\end{eqnarray*}
since $r<1$ and $s+\theta(1-s) \leq1$.
\end{proof} 

The next lemma estimates the energy of a measure on $F_p$ under the kernel ${\psi}_{r, \theta}^{s}$.

\begin{lemma}\label{energy}
Given $\theta, s \in (0,1]$ and $h\in (0,1)$ there is a number  $c>0$ such that for all $0<r<1$ there is a measure $\mu_r\in\mathcal{M}(F_p)$ such that 
$$
E(\mu_r) := \ \int\!\!\int {\psi}_{r, \theta}^{s}(x-y)d\mu_r(x) d\mu_r(y) \ \leq\ c\, r^{(\theta(1-s)+s)/(1+ph)}.
$$
\end{lemma}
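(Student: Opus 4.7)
Write $\alpha = \theta(1-s)+s \in (0,1]$, which is the exponent appearing in the kernel $\psi_{r,\theta}^{s}$. The plan is to take $\mu_r$ to be the uniform probability measure on the $M$ points
\[
\bigl\{1/k^{p} : M \leq k \leq 2M-1\bigr\} \subseteq F_p,
\]
where $M = M(r) := \bigl\lceil r^{-\alpha/(1+ph)} \bigr\rceil$ (the bound for $r$ bounded away from $0$ is trivial after enlarging $c$, so I will assume $r$ is small enough that $M \geq 2$). This is exactly the measure flagged in the introduction.

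Now split the double sum for $E(\mu_r)$ into diagonal and off-diagonal parts. The diagonal contributes $M \cdot \psi_{r,\theta}^{s}(0)/M^{2} = 1/M$. For the off-diagonal part, use the mean value theorem: for $M \leq j < k \leq 2M-1$ one has
\[
\frac{1}{j^{p}} - \frac{1}{k^{p}} \geq \frac{p(k-j)}{(2M-1)^{p+1}} \geq \frac{c_{p}\,(k-j)}{M^{p+1}}
\]
for a constant $c_{p}>0$ depending only on $p$. Since $\psi_{r,\theta}^{s}(x) \leq r^{\alpha}/|x|^{h}$ for all $x\neq 0$, setting $n = k-j$ and noting that the number of pairs with $|j-k|=n$ is $2(M-n)$ yields
\[
E(\mu_r) \leq \frac{1}{M} + \frac{2\,r^{\alpha}}{M^{2}} \sum_{j<k} \frac{1}{|1/j^{p}-1/k^{p}|^{h}} \leq \frac{1}{M} + \frac{C\, r^{\alpha} M^{(p+1)h}}{M^{2}} \sum_{n=1}^{M-1}(M-n)\,n^{-h}.
\]
Since $0<h<1$, the elementary estimate $\sum_{n=1}^{M-1}(M-n)n^{-h} \leq M\sum_{n=1}^{M-1}n^{-h} \leq C\,M^{2-h}$ applies, and the right-hand side becomes $1/M + C\,r^{\alpha}M^{ph}$.

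Finally, plug in the choice $M \asymp r^{-\alpha/(1+ph)}$: the exponent arithmetic gives $1/M \asymp r^{\alpha/(1+ph)}$ and
\[
r^{\alpha} M^{ph} \asymp r^{\alpha}\,r^{-\alpha ph/(1+ph)} = r^{\alpha(1+ph-ph)/(1+ph)} = r^{\alpha/(1+ph)},
\]
so both terms match the target bound. The only genuine choice in the argument is the value of $M$; once that is pinned down so that the two contributions balance, the remaining steps (the MVT gap estimate, $\psi \leq r^{\alpha}/|x|^{h}$, and the $\sum n^{-h}$ computation) are routine. The main thing to be careful about is the constant $c_p$ in the gap estimate so that it works uniformly in $p$ (small or large), but this is handled by a one-line computation.
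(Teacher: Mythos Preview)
Your proof is correct and follows essentially the same approach as the paper: the same measure $\mu_r$ supported on $\{1/k^p : M\le k\le 2M-1\}$, the same mean-value-theorem gap estimate, and the same balancing choice $M\asymp r^{-\alpha/(1+ph)}$ leading to $E(\mu_r)\le 1/M + C\,r^{\alpha}M^{ph}$. The only difference is cosmetic: the paper estimates the off-diagonal sum via a dyadic decomposition of the distances $|x_i-x_j|$, whereas you group pairs by index difference $n=k-j$ and use $\sum_{n<M} n^{-h}\lesssim M^{1-h}$ directly---both are routine and give the same bound.
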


\begin{proof}
Given $r$ we will find $M\equiv M(r)\in\N$ such that  the measure $\mu_r$ formed by placing a point mass of $1/M$ on each of the $M$ points of
$$F_p^M \ = \ \bigg\{ \frac{1}{(2M-1)^p}, \frac{1}{(2M-2)^p},\dots,  \frac{1}{M^p}\bigg\}\ \subseteq\ F_p$$
satisfies the conclusion. Note that if $M\leq k \leq 2M-1$, then 
$$\frac{p}{(2M)^{p+1}}\   \leq\  \frac{p}{(k+1)^{p+1}}\ <\  \Big|\frac{1}{k^p} -\frac{1}{(k+1)^p}\Big|\  < \   \frac{p}{k^{p+1}}\   \leq\  \frac{p}{M^{p+1}},$$
by a mean value theorem estimate. In particular, if $g$ is the minimum gap length between any pair of points of $F^M_p$, then $g\geq p/(2M)^{p+1}$ so any interval of length $R\geq g$ intersects at most $aM^{p+1}R$ points of $F^M_p$, where $a=2^{p+2}/p$. We estimate the energy 
$$E(\mu_r)\ = \ \int\!\! \int {\psi}_{r, \theta}^{s}(x-y)d\mu_r(x) d\mu_r(y) \ 
= \ \frac{1}{M^2}\sum_{M\leq i,j \leq 2M-1}{\psi}_{r, \theta}^{s}(x_i  - x_j)$$
where for convenience we write $x_i =1/i^p$.  Let $m$ be the greatest integer such that $2^m g  \leq M^{-p}$. Using  \eqref{bounds}, 
\begin{eqnarray*}
M^2 E(\mu_r) 
&=& \sum_{x_i  = x_j}\! \!{\psi}_{r, \theta}^{s}(x_i  - x_j)
\ +  \sum_{g \leq |x_i  - x_j|\leq 1/M^p}\! \!{\psi}_{r, \theta}^{s}(x_i  - x_j)\\
&=& 
M\ +\  \sum_{g\leq |x_i  - x_j|\leq 1/M^p}\frac{r^{\theta(1-s) + s}}{|x_i-x_j|^{h}}\\
&\leq&M\ +\ \sum_{k=0}^{m}   \sum_{2^k g  \leq |x_i  - x_j|\leq 2^{k+1} g}\frac{r^{\theta(1-s) + s}}{|x_i-x_j|^{h}}\\
&\leq&M\ +\ r^{\theta(1-s) + s}\sum_{k=0}^{m}  (2^k g)^{-h} MaM^{p+1}2^{k+1}g \\
&\leq&M\ +\ c_1 M^{p+2} r^{\theta(1-s) + s} g^{1-h}\, 2^{m(1-h)}\\
&\leq&M\ +\ c_1 M^{p+2}r^{\theta(1-s) + s} g^{1-h} (M^{-p}g^{-1})^{(1-h)}\\
&=&M\ +c_1 M^{2+ph}r^{\theta(1-s)+s} 
\end{eqnarray*}
where $c_1 $ is a constant depending only on $h$ and we have taken the dominant term of the geometric sum.
Thus 
\begin{eqnarray*}
E(\mu_r) &\leq& M^{-1}\ +\ c_1M^{ph}r^{\theta(1-s)+s}\\
&=& (2+c_12^{ph}) r^{(\theta(1-s)+s)/(1+ph)}
\end{eqnarray*}
on setting $M =\lceil r^{-(\theta(1-s)+s)/(1+ph)}\rceil \leq 2r^{-(\theta(1-s)+s)/(1+ph)}$ as $r<1$.
\end{proof} 

\noindent{\bf Proof of Theorem \ref{main}.}
\noindent The conclusion  is clear when $\theta = 0$, as $B_h(F_p)$ is countable so has Hausdorff dimension $0$, so assume  $\theta \in (0,1]$.

\noindent {\it Upper bound:} 
Let $0<\epsilon<h$. Index-$h$ fractional Brownian motion  satisfies 
$$|B_h(x)|\ \leq\ K t^{h-\epsilon} \qquad (0\leq x\leq 1)$$
almost surely for some $K<\infty$ see, for example, \cite{BarMou}.

Let $0<r<1$ and  $M$ be an integer and take a cover of $B_h(F_p)$ by intervals $\{U_i\}$ with
$r\leq |U_i|\leq r^\theta$ by covering each point  $B_h(1/k^p)\ (1\leq k \leq M)$ by an interval of length $r$ and covering 
$B_h\big([0,1/M^p]\big) \subseteq [-K M^{-p(h-\epsilon)},  K M^{-p(h-\epsilon)}]$ by abutting intervals of length $r^\theta$. Then
$$\sum_i|U_i|^s\ \leq\ M r^s +\bigg[\frac{2KM^{-p(h-\epsilon)}}{r^\theta}+1\bigg] (r^\theta)^s\ 
=\  Mr^s  + 2K r^{\theta(s-1)}M^{-p(h-\epsilon)}+ r^{s\theta}.$$
Setting $M = \lceil r^{(\theta (s-1)-s)/(1+p(h-\epsilon))}\rceil \geq 2$ gives
$$
\sum_i|U_i|^s\ \leq\ 2(1+K)r^{(s(p(h-\epsilon) + \theta) -\theta)/(1+p(h-\epsilon))}+ r^{s\theta} \to 0
$$
as $r\to 0$ provided that $s> \theta/ (p(h-\epsilon) + \theta)$. Taking $\epsilon$ arbitrarily small we conclude that 
\be\label{upbnd}
\uid B_h(F_p)\leq\displaystyle{\frac{\theta}{ph+\theta}}
\ee 
almost surely, by the definition \eqref{uid} of $\uid$. 

\noindent {\it Lower bound:} 
From Lemmas \ref{expect} and \ref{energy} and using Fubini's theorem, there is a number $c$ independent of $r$ such that for all $0<r<1$ there is a measure $\mu_r$ on $F_p$ such that 
$$\E\bigg(\int\!\!\int \widetilde{\phi}_{r, \theta}^s\big(B_h(x) -  B_h(y)\big)d\mu_r(x)d\mu_r(y)\bigg) \ 
\leq c\ r^{(\theta(1-s)+s)/(1+ph)}.$$
For $\epsilon >0$, setting $r=2^{-k}, k\in \N$, and summing,
$$\E\bigg(\sum_{k=1}^\infty 2^{k[(\theta(1-s)+s)/(1+ph)-\epsilon]}\int\!\!\int \widetilde{\phi}_{2^{-k}, \theta}^s\big(B_h(x) -  B_h(y)\big)d\mu_{2^{-k}}(x)d\mu_{2^{-k}}(y)\bigg) \ 
\leq c\ \sum_{k=1}^\infty2^{-k\epsilon}\ <\infty.$$
Hence, almost surely there exists a random $K<\infty$ such that 
$$ \int\!\!\int \widetilde{\phi}_{r, \theta}^s\big(B_h(x) -  B_h(y)\big)d\mu_{r}(x)d\mu_{r}(y)
\ \leq\ K r^{(\theta(1-s)+s)/(1+ph)-\epsilon}$$
for $r=2^{-k}$ for all $k\in\N$ and thus for all $0<r<1$ with a modified $K$, noting that the two sides change only by a bounded ratio on replacing $r$ by $2^{-k}$ for the least $k$ such that $2^{-k} \leq r$. Writing $\widetilde{\mu}_r$ for the image measure of $\mu_r$ under $B_h$, so $\widetilde{\mu}_r$ is supported by $B_h(F_p)$ (in the notation of Lemma \ref{energy} $\widetilde{\mu}_r$ consists of a mass of $1/M$ on each point $B_r(x_i)$), this becomes 
$$ \int\!\!\int \widetilde{\phi}_{r, \theta}^s\big(u -  v\big)d\widetilde{\mu}_{r}(u)d\widetilde{\mu}_{r}(v)
\ \leq\ K r^{(\theta(1-s)+s)/(1+ph)-\epsilon}.$$
Thus, by Lemma \ref{capacitylb}, almost surely there is a $K<\infty$ such that for all $0<r<1$, 
$$S_{r, \theta}^s(B_h(F_p))\ \geq\ K^{-1} r^{ s-(\theta(1-s)+s)/(1+ph)+\epsilon}$$
so
$$\liminf\limits_{r \rightarrow 0} \frac{\log S_{r, \theta}^s(B_h(F_p))}{-\log r}\ \geq\  
-s+\frac{\theta(1-s)+s}{(1+ph)}-\epsilon \ =\ \frac{\theta(1-s)-sph}{(1+ph)}-\epsilon
$$
and this remains true almost surely on setting $\epsilon =0$. Hence 
$\liminf\limits_{r \rightarrow 0} \log S_{r, \theta}^s(B_h(F_p))/-\log r \geq 0$ if $\theta(1-s)+sph =0$, that is if
$s= \theta/(ph+\theta)$, so by \eqref{altlid}
\be\label{lobnd}
\lid B_h(F_p)\ \geq\ \displaystyle{\frac{\theta}{ph+\theta}}.
\ee
Combined with \eqref{upbnd} this  gives \eqref{ans} almost surely for each $\theta \in [0,1]$. Thus, almost surely, \eqref{ans} holds for all rational $\theta\in [0,1]$ simultaneously, and so, since box dimensions are continuous for  $\theta\in (0,1]$, for all  $\theta\in [0,1]$ simultaneously.
\hfill$\Box$ 

Finally we remark that a similar approach can be used to find or estimate the intermediate and box-counting dimensions of fractional Brownian images of sets defined by other sequences tending to 0. For example, let $f: [1,\infty)\to \R^+$ be a decreasing function and let $F= \{0,f(1), f(2), \ldots\}$. Then if $f(x) = O(x^{-p})$ the upper bound argument gives
\be
\uid B_h(F)\leq\displaystyle{\frac{\theta}{ph+\theta}}
\ee 
almost surely. If we also assume that $f$ is differentiable with non-increasing absolute derivative $|f'|$ then $F$ has `decreasing gaps', that is $f(k)-f(k+1)$ is non-increasing, and if
$$\frac{f(x)^{1-h}}{|f'(2x)|} = O(x^{1+ph})$$
a similar energy argument gives that almost surely, for all $\theta\in [0,1]$,
\be
\lid B_h(F)\ \geq\ \displaystyle{\frac{\theta}{ph+\theta}}.
\ee

\section*{Acknowledgements}
The author thanks Amlan Banaji, Stuart Burrell, Jonathan Fraser and Istvan Kolossv\'{a}ry for many discussions related to intermediate dimensions.

\end{document}